\newcommand{\shrinkmargins}[1]{
  \addtolength{\textheight}{#1\topmargin}
  \addtolength{\textheight}{#1\topmargin}
  \addtolength{\textwidth}{#1\oddsidemargin}
  \addtolength{\textwidth}{#1\evensidemargin}
  \addtolength{\topmargin}{-#1\topmargin}
  \addtolength{\oddsidemargin}{-#1\oddsidemargin}
  \addtolength{\evensidemargin}{-#1\evensidemargin}
  }
\DeclareMathOperator{\GL}{GL}
\DeclareMathOperator{\Sp}{Sp}
\DeclareMathOperator{\GSp}{GSp}
\DeclareMathOperator{\Frob}{Frob}
\DeclareMathOperator{\Aut}{Aut}
\DeclareMathOperator{\Out}{Out}
\DeclareMathOperator{\Epi}{Epi}
\DeclareMathOperator{\Jac}{Jac}
\newcommand{\field}[1]{\mathbb{#1}}
\newcommand{\Qp}{\field{Q}_p}
\newcommand{\G}{\mathbf{G}}
\newcommand{\E}{\mathbb{E}}
\newcommand{\cc}{\mathbf{c}}
\newcommand{\Q}{\field{Q}}
\newcommand{\Zp}{\field{Z}_p}
\newcommand{\Z}{\field{Z}}
\newcommand{\F}{\field{F}}
\newcommand{\Fqbar}{\bar{\field{F}}_q}
\newcommand{\CC}{\mathcal{C}}
\newcommand{\ra}{\rightarrow}
\newcommand{\beq}{\begin{displaymath}}
\newcommand{\eeq}{\end{displaymath}}
\newcommand{\beqn}{\begin{equation}}
\newcommand{\eeqn}{\end{equation}}
\theoremstyle{plain}
\newtheorem{thm}{Theorem}[section]
\newtheorem{prop}[thm]{Proposition}
\newtheorem{lem}[thm]{Lemma}
\newtheorem{heur}[thm]{Heuristic}
\theoremstyle{definition}
\newtheorem{defn}[thm]{Definition}
\theoremstyle{remark}
\newtheorem{rem}[thm]{Remark}
\title{Random pro-$p$ groups, braid groups, and random tame Galois groups}
\author{Nigel Boston \thanks{Partially
supported by the Claude
Shannon Institute, Science
Foundation Ireland Grant 06/MI/006 and Stokes Professorship award, Science Foundation Ireland Grant 07/SK/I1252b. We thank Joann Boston for making the figure.} and Jordan S. Ellenberg \thanks{Partially supported by NSF-CAREER Grant DMS-0448750 and a Sloan research fellowship.}}
\begin{document}

\maketitle
\section{Introduction}

In this article we introduce a heuristic prediction for the distribution of the isomorphism class of $G_S(p)$, the Galois group of the maximal pro-$p$ extension of $\Q$ unramified outside of $S$, where $S$ is a "random" set of primes.  That these groups should exhibit any statistical regularity is not at all obvious; our expectations in this direction are guided by the Cohen-Lenstra conjectures, which (among other things) predict quite precisely how often a fixed finite group appears as the ideal class group of a quadratic imaginary field.

The Cohen-Lenstra conjectures can be obtained in (at least) two ways.  On the one hand, the distribution on finite abelian groups suggested by the heuristics has a good claim on being the most natural ``uniform distribution" on the category of finite abelian $p$-groups.  On the other hand, as observed by Friedman and Washington (\cite{frie:friedmanwashington}, see also \cite{acht:cohenlenstra}) the conjectures can also be recovered via the analogy between number fields and function fields; here one thinks of the class group as the cokernel of $\gamma-1$ where $\gamma$ is a $p$-adic matrix drawn randomly from a suitable subset of the $\Q_p$-points of an algebraic group.  We will show that both heuristic arguments can be generalized to the nonabelian pro-$p$ case, and that both lead to the same prediction, Heuristic~\ref{he:main} below.

We conclude by describing some evidence, both theoretical and experimental, that supports (or at least is consistent with) Heuristic~\ref{he:main}.   We pay special attention to the interesting case where $p=2$ and $S$ consists of two primes congruent to $5 \pmod 8$.  In this case, the heuristic appears to suggest that $G_S(p)$ is infinite $1/16$ of the time.  

\subsection{Notation}
When $x$ and $y$ are elements of a group, we use $x \sim y$ to mean "$x$ is conjugate to $y$."  We say a pro-$p$ group $\Gamma$ is {\em balanced} if its generator rank equals its relator rank.

\section{Statement of the heuristic}

If $S$ is a set of primes in $\Q$, we denote by $G_S(p)$ the Galois group of the maximal $p$-extension unramified away from $S$ (including infinity if $p=2$).  Our aim in this section is to present a heuristic answer to the question:  "When $S$ is a random set of primes, what is the probability that $G_S(p)$ is isomorphic to some specified finite $p$-group $\Gamma$?"

In order to phrase the question precisely, we need a little notation.

If $S = (\ell_1, \ldots, \ell_g)$ is a $g$-tuple of primes congruent to $1$ mod $p$, we denote by $Z_i$ the closure of $\ell_i^\Z$ in $\Z_p^*$ and by $W_i$ the group $\Z_p / (\ell_i - 1)\Z_p$.  Note that $G_S(p)^{ab}$ is isomorphic to $W = \oplus_{i=1}^g W_i$, a finite abelian $p$-group of rank $g$.   In this paper, $S$ always denotes an {\em ordered} $g$-tuple of primes.

\begin{defn} The  {\em type} of $S$ is the sequence of subgroups $(Z_1, \ldots, Z_g)$. 
\end{defn}

Note that when $p$ is odd, the type of $(\ell_1, \ldots, \ell_g)$ is determined by the maximal power of $p$ dividing $\ell_i-1$ for each $i$; in particular, the type of $S$ carries the same information as the sequence of groups $W_i$.  When $p = 2$, the type of $S$ determines the $W_i$, but is not determined by it; for instance, primes $\ell$ which are  $3 \pmod 8$ and those which are $7 \pmod 8$ are of different types, but both have $\Z_2/(\ell-1)\Z_2 \cong \Z/2\Z$.   We write $W(Z)$ for the finite $p$-group attached to a type $Z = (Z_1, \ldots, Z_g)$ by taking the sum of the corresponding $W_i$. 

Let $Z = (Z_1, \ldots, Z_g)$ be a type, and $\Gamma$ a finite $p$-group such that $\Gamma^{ab} \cong W(Z)$ and $h_1(\Gamma,\F_p) = h_2(\Gamma,\F_p) = g$, where $h_i(\Gamma,\F_p)$ denotes the dimension of $H^i(\Gamma,\F_p)$.   Then $\Gamma$ is balanced. We define $P(Z,\Gamma,X)$ to be the proportion of $g$-tuples of primes $S = (\ell_1, \ldots, \ell_g)$ with type $Z$ in $[X, ..., 2X]^g$ such that $G_S(p) \cong \Gamma$.  Then the behavior of $P(Z,\Gamma,X)$ as $X$ grows can be thought of as the probability that a random $g$-tuple of primes of type $Z$ has $\Gamma$ as its maximal unramified Galois group.

In order to state our heuristic estimate for this probability, we need a little more notation.

Write $\pi: \Gamma \ra \Gamma^{ab}$ for the natural projection. 

\begin{defn}
Let $A_Z(\Gamma)$ be the number of pairs $((c_1, \ldots, c_g),\iota)$ where $(c_1, \ldots, c_g)$ is a $g$-tuple of conjugacy classes in $\Gamma$ and $\iota$ is an involution in $\Gamma$ (automatically trivial when $p$ is odd) such that:
\begin{itemize}
\item $c_i^z = c_i$ for all $z \in Z_i$;
\item the elements $\pi(c_1), \ldots, \pi(c_g)$ generate $\Gamma^{ab}$;
\item the map $W(Z) \ra \Gamma^{ab}$ sending $(w_1, \ldots, w_g)$ to $\sum_i w_i \pi(c_i)$ is an isomorphism;
\item if $p = 2$, and $\iota_i$ is the unique nontrivial involution in the cyclic subgroup of $\Gamma^{ab}$ generated by $\pi(c_i)$, we have $\sum_{i=1}^g \iota_i = \pi(\iota)$.
\end{itemize}
\label{de:azg}
\end{defn}

\begin{rem} When no confusion is likely (i.e. when we are restricting attention to a particular type $Z$) we will write $A(\Gamma)$ for $A_Z(\Gamma)$.
\end{rem}

\begin{heur}
$\lim_{X \ra \infty} P(Z,\Gamma,X)$ exists and is equal to $A_Z(\Gamma)/|\Aut(\Gamma)|$.
\label{he:main}
\end{heur}

The value of $A_Z(\Gamma)/|\Aut(\Gamma)$ is easy to compute for any particular choice of $Z$ and $\Gamma$.
For example, in \S \ref{s:evidence}, we discuss the case where $p$ is an odd prime, $\Gamma$ is a $2$-generator $2$-relator group of order $p^3$ with abelianization $(\Z/p\Z)^2$, and $Z_1 = Z_2 = 1+p\Z_p$.  In this case, we show that $A_Z(\Gamma)/\Aut(\Gamma) = 1 - p^{-2}$, and we in fact show that Heuristic~\ref{he:main} is correct in this case.

We note that the sum over all finite $\Gamma$ of $A_Z(\Gamma)/|\Aut(\Gamma)|$ need not be equal to $1$.  This should correspond to the fact that there may be a positive probability that $G_S(p)$ is infinite.  Certainly, the philosophy of the paper demands that the sum of $A_Z(\Gamma)/|\Aut(\Gamma)|$ should be {\em at most} $1$, but we do not at present know how to prove this inequality.  Best of all would be to devise a suitable extension of the heuristics discussed here to infinite pro-$p$ groups $\Gamma$.

In the following sections, we explain two justifications for Heuristic~\ref{he:main}, each one adapted from a justification for the Cohen-Lenstra heuristics.



\section{Justification 1:  random $p$-groups with inertia data}

There is no uniform distribution on a countably infinite set.  Nonetheless, there is a natural ``uniform" distribution on the set of isomorphism classes of finite abelian $p$-groups, following the usual principle that objects in a category should be assigned a weight inversely proportional to the order of their automorphism group.  (See Leinster\cite{le:category} for a thorough development of this idea.) The sum of $|\Aut(\Gamma)|^{-1}$ as $\Gamma$ ranges over isomorphism classes of finite abelian $p$-groups is finite; thus, there is a unique probability distribution on these isomorphism classes such that the mass assigned to $\Gamma$ is proportional to $|\Aut(\Gamma)|^{-1}$.  The Cohen-Lenstra heuristic can be thought of as asserting that the $p$-part of the class group of a random imaginary quadratic field is a ``random" abelian group, in the sense that it follows this ``uniform" distribution on finite abelian $p$-groups.  Many of the more general conjectures of Cohen, Lenstra, and Martinet are in the same spirit, predicting that class groups of extensions with more complicated structure (e.g. Galois extensions with Galois group $K$) are ``random" objects in more general categories (e.g. the category of finitely generated $\Z/p\Z[K]$-modules) in the analogous sense.

We now present an argument in a similar spirit that leads us to Heuristic~\ref{he:main}.

One might start by trying to construct a probability distribution on finite $p$-groups where the measure assigned to $\Gamma$ is proportional to $|\Aut(\Gamma)|^{-1}$.  This, however, is easily seen to be incorrect.  The relevant difference between the Cohen-Lenstra context and the one treated in the present paper is that, as far as we know, any finite abelian $p$-group can arise as the $p$-part of the class group of an imaginary quadratic field.  By contrast, not every balanced $p$-group can be $G_S(p)$ for some set of primes $S$ of a given type.  For instance, take $p=3,g=2,$ and $Z_1 = Z_2 = 1+3\Z_3$.  Then $G_S(p)$ is a group with abelianization $(\Z/3\Z)^2$, and the image of tame inertia at each prime in $S$ lies in a conjugacy class $c$ of $\Gamma$ satisfying $c^4 = c$.  But there exists a group $G$ of order $3^5$ which is not generated by any two elements which are conjugate to their fourth powers.  Thus, $G_S(p)$ cannot be isomorphic to $\Gamma$, and any reasonable heuristic should reflect this by assigning probability $0$ to $\Gamma$.  

In order to avoid problems of this kind, we introduce the notion of a ``pro-$p$-group with local data of type $Z$."  As in Definition~\ref{de:azg}, we denote by $\pi$ the natural projection from a group to its abelianization.

%

\begin{defn} A {\em pro-$p$ group with local data of type $Z$} is a balanced pro-$p$ group $\Gamma$ endowed with a $g$-tuple of conjugacy classes $(c_1, \ldots, c_g)$ and an involution $\iota$ in $\Gamma$ (automatically trivial when $p$ is odd), such that:
\begin{itemize}
\item $c_i^z = c_i$ for all $z \in Z_i$;
\item the projections $\pi(c_1), \ldots, \pi(c_g)$ generate $\Gamma^{ab}$;
\item the map $W(Z) \ra \Gamma^{ab}$ sending $(w_1, \ldots, w_g)$ to $\sum_i w_i \pi(c_i)$ is an isomorphism;
\item if $p = 2$, and $\iota_i$ is the unique nontrivial involution in the cyclic subgroup of $\Gamma^{ab}$ generated by $\pi(c_i)$, we have $\sum_{i=1}^g \iota_i = \pi(\iota)$.
\end{itemize}
\end{defn}

For each $\ell \neq p$, we fix a generator $\tau_\ell$ of the tame inertia group of $G_{\Q_\ell}$.  We also fix a complex conjugation $c$ in $G_\Q$.  When $S = (\ell_1, \ldots, \ell_g)$ is a $g$-tuple of primes with type $Z$, the unramified Galois group $G_S(p)$ acquires the structure of pro-$p$-group with local data of type $Z$ in a natural way; namely, $c_i$ is the conjugacy class of the image of $\tau_{\ell_i}$ in $G_S(p)$ and $\iota$ is the image of $c$.


Finite $p$-groups with local data of type $Z$ constitute the objects of a category $\CC_{p,Z}$ whose morphisms are just homomorphisms of groups preserving $c_1, \ldots, c_g$ and $\iota$.  Just as the category of finite abelian $p$-groups is the ``natural" home of the $p$-part of the class group of a quadratic imaginary field, a finite $p$-group with inertia data is the natural home of $G_S(p)$.  Thus, the Cohen-Lenstra philosophy would suggest that the probability that $G_S(p)$ and $(\Gamma,(c_1, \ldots, c_g), \iota)$ are isomorphic in $\CC_{p,Z}$ should be proportional to $|\Aut_{\CC_{p,i}} (\Gamma,(c_1, \ldots, c_g), \iota)|^{-1}$.  The probability that $G_S(p)$ is isomorphic to $\Gamma$ as a group is then obtained by summing over all $\Aut(\Gamma)$-orbits of inertia data on $\Gamma$.  This yields precisely the prediction that  $P(Z,\Gamma,X)$ is proportional to $A_Z(\Gamma)/|\Aut(\Gamma)|$.

In particular, the fact that the group $G_S(p)$ carries local data of type $Z$ shows that Heuristic~\ref{he:main} passes one easy plausibility check:

\begin{prop}  If $A_Z(\Gamma) = 0$, there is no $g$-tuple $S$ of type $Z$ such that $G_S(p)$ is isomorphic to $\Gamma$.
\end{prop}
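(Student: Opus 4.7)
The plan is to prove the contrapositive: if some $g$-tuple $S$ of type $Z$ satisfies $G_S(p) \cong \Gamma$, then $A_Z(\Gamma) \geq 1$. The key observation, already flagged in the paragraph preceding the statement, is that $G_S(p)$ carries a canonical structure of pro-$p$-group with local data of type $Z$. Transporting this structure along the isomorphism $G_S(p)\cong \Gamma$ produces a pair $((c_1,\ldots,c_g),\iota)$ counted by $A_Z(\Gamma)$, and the proposition follows.

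Concretely, having fixed generators $\tau_{\ell_i}$ of tame inertia at each $\ell_i \in S$ and a complex conjugation $c \in G_\Q$, I set $c_i$ to be the conjugacy class in $G_S(p)$ of the image of $\tau_{\ell_i}$ and $\iota$ to be the image of $c$, then verify the four bullets of Definition~\ref{de:azg}. The invariance $c_i^z = c_i$ for $z \in Z_i$ comes from the standard presentation of the tame quotient of $G_{\Q_{\ell_i}}$: a Frobenius lift conjugates $\tau_{\ell_i}$ to $\tau_{\ell_i}^{\ell_i}$, so the conjugacy class in $G_S(p)$ satisfies $c_i^{\ell_i} = c_i$; since the image of $\tau_{\ell_i}$ has $p$-power order and $Z_i$ is the closure of $\ell_i^{\Z}$ in $\Z_p^*$, the relation extends by continuity to all $z \in Z_i$. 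The generation condition on $\pi(c_1),\ldots,\pi(c_g)$ and the fact that the map $W(Z) \ra \Gamma^{ab}$ is an isomorphism are then restatements of class field theory, which identifies $G_S(p)^{ab}$ with $\bigoplus_i W_i$ in such a way that the image of $\tau_{\ell_i}$ corresponds to the tautological generator of $W_i$; both conditions are already recorded, in essence, in the discussion of $G_S(p)^{ab}$ at the start of the section.

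The final bullet, which is nontrivial only when $p=2$, is the subtle one: one must identify the image $\pi(\iota)$ of complex conjugation in $\Gamma^{ab}$ with the prescribed sum $\sum_i \iota_i$ of involutions coming from the $W_i$. The matching is again class-field-theoretic, and uses the fact that for $p=2$ infinity is included in the ramification set so that complex conjugation contributes a nontrivial archimedean character in the idele class group whose components at the primes $\ell_i$ are exactly the nontrivial involutions in each $W_i$. I would verify this by computing the image of the archimedean place in the ray class group of conductor $\prod_i \ell_i \cdot \infty$ and comparing with the recipe for $\iota_i$ in Definition~\ref{de:azg}.

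The main obstacle is precisely this archimedean bookkeeping at $p=2$: the first three bullets amount to unwinding well-known facts about tame inertia and class field theory, while the involution condition requires matching the classical sign computation at the real place with the combinatorial definition of $\iota_i$. Once this is in place, pushing the datum $((c_1,\ldots,c_g),\iota)$ across any chosen isomorphism $G_S(p) \iso \Gamma$ yields an element of the set enumerated by $A_Z(\Gamma)$, contradicting $A_Z(\Gamma) = 0$.
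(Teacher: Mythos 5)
Your proposal is correct and follows exactly the route the paper intends: the proposition is stated as an immediate consequence of the preceding observation that $G_S(p)$ carries a canonical structure of pro-$p$ group with local data of type $Z$ (with $c_i$ the class of the image of $\tau_{\ell_i}$ and $\iota$ the image of complex conjugation), which produces a pair counted by $A_Z(\Gamma)$. The paper leaves the verification of the four bullets implicit; your filling in of the tame-inertia relation, the class-field-theoretic identification of $G_S(p)^{ab}$ with $W(Z)$, and the archimedean computation of $\pi(\iota)$ at $p=2$ is the same argument, just made explicit.
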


\begin{rem} This justification also suggests obvious refinements of Heuristic~\ref{he:main} taking local data into account;  for instance, when $p=2$, one can ask for what proportion of $g$-tuples $S$ there is an isomorphism $G_S(p) \cong \Gamma$ taking complex conjugation to a given conjugacy class of involutions in $\Gamma$.  Furthermore, one could speculate that wildly ramified extensions can be brought into the picture as well by adding to $(c_1, \ldots, c_g),\iota$ the data of a homomorphism $G_{\Q_p} \ra \Gamma$.  This would bring us closer to the heuristics suggested by Bhargava~\cite{bhar:massformulae} and Roberts~\cite{robe:wildpartitions} for the number of extensions of global fields with specified Galois groups and restrictions on local behavior.   On the other hand, it takes us farther away from the analogy with function fields over finite fields that we explore in the next section.
 \end{rem}

The class group of a number field is always finite, so the probability distribution is determined by the fact that the sum of the mass of all finite groups is $1$.  In the present situation, we have no such assurance; for many choices of the type $Z$, $G_S(p)$ can be either finite or infinite, so we have no principled way of choosing one probability distribution on finite $p$-groups among all those proportional to $A_Z(\Gamma)/|\Aut(\Gamma)|$.  We address this problem in the next section.

\section{Justification 2:  Random pro-$p$ braids and the analogy with function fields}

In this section we give an alternative (though certainly related) justification for Heuristic~\ref{he:main}, based on the analogy between $\Z$ and the ring $\F_q[t]$ (or, more generally, the coordinate ring of an affine algebraic curve over a finite field.)

\subsection{The action of Frobenius on the fundamental group of an affine curve}

Let $\F_q$ be a finite field of characteristic other than $p$, let $C/\F_q$ be a smooth projective algebraic curve, let $x_\infty$ be a point of $C(\F_q)$, and let $U/\F_q$ be an open subscheme of $C$ not containing $x_\infty$.  Then the \'{e}tale fundamental group $\pi_1(U)$ fits into an exact sequence
\beq
1 \ra \pi_1(U_{\Fqbar}) \ra \pi_1(U) \ra G_{\F_q} \ra 1
\eeq
in which the second map admits a section $s$ attached to any choice of tangential base point at $x_\infty$.  

Note that the $p$-cyclotomic character $\chi_p: \pi_1(U) \ra \Z_p^*$ factors through the projection to $G_{\F_q}$ and sends Frobenius to $q$.  In particular, the image of $\chi_p$ is procyclic.  The $p$-cyclotomic character of $G_\Q$ is surjective on $\Z_p^*$; in particular, when $p=2$, the image is not procyclic.  In order to make our analogy as precise as possible, we assume from now on that $p$ is odd and that $q$ generates $\Z_p^*$.  We will return in section~\ref{ss:p2} to the case $p=2$.

\begin{rem}  The hypothesis that $q$ generates $\Z_p^*$ is natural when drawing an analogy with $\Q$;   to generate conjectures about maximal pro-$p$ extensions with restricted ramification of larger number fields $K$ would require a different hypothesis on $q$ reflecting the abelian extensions of $\Q$ contained in $K$.  In particular, the presence of roots of unity in $K$ ought to change the statistical distribution of the Galois groups, even in the abelian cases considered by Cohen, Lenstra, and Martinet.  Such a discrepancy from Cohen-Lenstra predictions has in fact been observed numerically by Malle~\cite{mall:cl} in the case of number fields and Rozenhart~\cite{roze:phd} in the case of function fields.  Forthcoming work of Garton~\cite{gart:thesis} will explain how one should modify the Cohen-Lenstra-Martinet heuristics in the presence of roots of unity, based on a function-field argument like the one we present in the present paper.
\end{rem}

Let $G_\infty$ be a decomposition group of $\pi_1(U)$ at $x_\infty$.  Write $I_\infty$ for the inertia subgroup of $G_\infty$.  Note that the section $s: G_{\F_q} \ra \pi_1(U)$ factors through $G_\infty$. 


Write $G_U(p)$ for the quotient of $\pi_1(U)$ by the normal subgroup generated by $G_\infty$.  This is the Galois group of the maximal \'{e}tale cover of $U$ which is totally split at $x_\infty$.  (Note that, in the number field case, the maximal pro-$p$ extension of $\Q$ unramified outside $S$ is indeed totally split at $\infty$, because $p$ is odd here.)



The geometric fundamental group $\pi_1(U_{\Fqbar})$ is isomorphic to a free profinite group on $N$ generators, where $N+1$ is the number of punctures of $U_{\Fqbar}$.  Write $F$ for the quotient of $\pi_1(U_{\Fqbar})$ by the normal subgroup generated by $I_\infty$.  The group $s(G_{\F_q})$ acts on $\pi_1(U_{\Fqbar})$ by conjugation, and this action descends to an action on $F$.  Since $G_{\F_q}$ is procyclic, we can describe this action by specifying the action on $F$ of the Frobenius $\Frob_q$ in $s(G_{\F_q})$, which is an automorphism $\alpha \in \Aut(F)$.

The group $F$ is freely generated by $N$ elements $x_1, \ldots, x_N$, each one a generator of tame inertia at a puncture, subject to the single relation $x_1 \ldots x_N = 1$.  The automorphism $\alpha$ has the special property that it sends each $x_i$ to a conjugate of $x_j^q$ for some $j$.  The automorphisms of $F$ with this property lie in the {\em pro-$p$ braid group} $B_N \subset \Aut(F)$, which we define in the next section.


Now $G_U(p)$ is precisely the quotient of $F$ by relations $\alpha(x_i) = x_i, \forall i$.  In particular, $G_U(p)$ is determined by the automorphism $\alpha$.  The main idea driving the heuristics in this paper is that $\alpha$ should be a {\em random} element drawn from a certain subset of the profinite group $B_N$ under Haar measure.  In the following sections we describe the consequences of this heuristic for the behavior of $G_U(p)$.

\begin{rem}  The Cohen-Lenstra heuristics can also be recovered from this point of view, as was first observed by Friedman and Washington~\cite{frie:friedmanwashington}; the $p$-part of the class group of a random quadratic imaginary field is analogous to the $\F_q$-rational points in $\Jac(X)[p]$, where $X/\F_q$ is a hyperelliptic curve of large genus.  This group, in turn, is the cokernel of $\gamma - 1$, where $\gamma \in \GSp_{2g}(\Z_p)$ is a matrix representing the action of Frobenius on $H_1(X,\Z_p)$.  In fact, Frobenius lies in the coset $\GSp_{2g}^q(\Z_p)$ of $\Sp_{2g}(\Z_p)$ consisting of matrices which scale the symplectic form by $q$.  Taking $\gamma$ to be a random element of $\GSp_{2g}(\Z_p)$ yields precisely the Cohen-Lenstra heuristics.

Friedman and Washington took $\gamma$ to be a random element of $\GL_{2g}(\Z_p)$; the observation that the correct set over which to average is $\GSp_{2g}^q(\Z_p)$ is due to Achter~\cite{acht:cohenlenstra}.  

In fact, as in \cite{acht:cohenlenstra}, these heuristic arguments can often be turned into proofs that heuristics of Cohen-Lenstra type are true over function fields ``in the large $q$ limit."  In this connection, see also the work of Katz and Sarnak on the relation between random $p$-adic matrices and the distribution of zeroes of $L$-functions~\cite{katz:katzsarnak}.
\end{rem}

\subsection{The pro-$p$ braid group and random pro-$p$ groups}

Let $p$ be an odd prime, and let $F$ be the pro-$p$ group generated by elements $x_1, \ldots, x_N$ subject to the single relation $x_1 \ldots x_N = 1$.   We define the {\em pure pro-$p$ braid group} $P_N$ to be the subgroup of $\Aut(F)$ consisting of automorphisms $\alpha$ such that $\alpha(x_i) \sim x_i$ for all $i$.  For each $(q,\sigma) \in  \Zp^* \times S_N$ we denote by $B_N(q,\sigma)$ the set of automorphisms $\alpha$ of $F$ such that
\beq
\alpha(x_i) \sim  x_{\sigma(i)}^q
\eeq
for all $i$ and define the {\em pure pro-$p$ braid group} $P_N$ to be $B_N(1,1)$.  Then the pro-$p$ braid group $B_N$ is defined by
\beq
B_N = \bigcup_{q,\sigma} B_N(q,\sigma)
\eeq

The pro-$p$ braid group fits into an exact sequence
\beq
1 \ra P_N \ra B_N \ra \Z_p^* \times S_N \ra 1
\eeq
where the preimage of $(q,\sigma) \in S_N \times \Z_p^*$ is $B_N(q,\sigma)$.

\begin{rem}  The definition of the pro-$p$ braid group is due to Ihara~\cite{ihar:annals86}.  More precisely, his pro-$p$ braid group on $N$ strands is the image of our $B_N$ in $\Out(F)$.
\end{rem}

\begin{rem} The natural map $B_N \ra \Z_p^*$ is surjective, so $B_N(q,\sigma)$ is nonempty; but we are not aware of any purely group-theoretic proof of this fact.  Rather, one observes (again following Ihara) that when $F$ is identified with the pro-$p$ geometric fundamental group of an $N$-punctured genus $0$ algebraic curve over $\Q$, the resulting action of $G_\Q$ on $F$ induces a map $G_\Q \ra B_N$ whose composition with the projection to $\Z_p^*$ is the cyclotomic character.
\end{rem}

We are now ready to define the main object of study of the present paper:  namely, a ``random pro-$p$ group" $\G(q,\sigma)$.  

\begin{defn}  For $N \geq 1$ and $\sigma \in S_N$, we denote by $\G(q,\sigma)$ the quotient of $F$ by the relations $\alpha(x_i) = x_i, i = 1 \ldots N$, where $\alpha$ is a random element of $B_N(q,\sigma)$ in Haar measure.
\end{defn}

\subsection{Statistical properties of $\G(q,\sigma)$}

Write $k_1, \ldots, k_g$ for the lengths of the cycles of $\sigma$, and choose representatives $i_1, \ldots, i_g$ of the cycles.  Write $g_j$ for the image of $x_{i_j}$ in $\G(q,\sigma)$.  Then one sees that $g_j \sim g_j^{q^{k_j}}$.  We also note that
\beq
\G(q,\sigma)^{ab} = \sum_{j=1}^g \Z_p / (q^{k_g} - 1)\Z_p.
\eeq
We denote this finite abelian group by $W(q,\sigma)$.

\begin{prop} Let $g$ be the number of cycles of $\sigma$.  Then  $\G(q,\sigma)$ is a $g$-generated, $g$-related pro-$p$ group.
\end{prop}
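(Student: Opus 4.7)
The plan is to exhibit a pro-$p$ presentation of $\G(q,\sigma)$ with $g$ generators and $g$ relators, by reducing the presentation supplied by the definition via pro-$p$ Tietze transformations. By definition, $\G(q,\sigma)$ is the quotient of $F$ by the closed normal subgroup generated by $R_i := \alpha(x_i)x_i^{-1}$ for $i = 1, \ldots, N$. Combined with $F$'s own pro-$p$ presentation $\langle x_1, \ldots, x_N \mid P\rangle$, where $P = x_1 \cdots x_N$, this gives an initial pro-$p$ presentation with $N$ generators and $N+1$ relators.

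First I would observe that one of the relators $R_i$ is redundant. Since $\alpha$ is an automorphism of $F$, one has $\alpha(x_1) \cdots \alpha(x_N) = \alpha(P) = 1$ in $F$, so $\alpha(x_N) = (\alpha(x_1) \cdots \alpha(x_{N-1}))^{-1}$. A short calculation then shows $R_N = (\alpha(x_1)\cdots \alpha(x_{N-1}))^{-1} \cdot (x_1 \cdots x_{N-1})$ becomes trivial modulo the normal closure of $R_1, \ldots, R_{N-1}$. Hence $R_N$ can be discarded, leaving a pro-$p$ presentation of $\G(q,\sigma)$ with $N$ generators and $N$ relators.

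Next, I would carry out an iterative Tietze reduction, one cycle of $\sigma$ at a time. Choose the representatives so that the discarded $R_N$ is never required --- for instance, take $i_{j_0} = \sigma(N)$ in the cycle $j_0$ containing $N$. For the cycle with representative $i_j$ and length $k_j$, the relator $R_{\sigma^r(i_j)} = \alpha(x_{\sigma^r(i_j)}) x_{\sigma^r(i_j)}^{-1}$, in which $\alpha(x_{\sigma^r(i_j)})$ is conjugate to $x_{\sigma^{r+1}(i_j)}^q$, has image $q \cdot x_{\sigma^{r+1}(i_j)} - x_{\sigma^r(i_j)}$ in the Frattini quotient $F^{ab} \otimes \F_p$. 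Because $q$ generates $\Z_p^*$, the coefficient $q$ is a unit modulo $p$, and this licences the pro-$p$ analogue of a Tietze transformation: simultaneous removal of the generator $x_{\sigma^{r+1}(i_j)}$ and the relator $R_{\sigma^r(i_j)}$. Applying this elimination for $r = 0, 1, \ldots, k_j - 2$ inside each cycle removes $\sum_{j=1}^g (k_j - 1) = N - g$ generators and an equal number of relators, leaving a pro-$p$ presentation of $\G(q,\sigma)$ with $g$ generators $x_{i_1}, \ldots, x_{i_g}$ and $g$ relators.

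The main obstacle is justifying the pro-$p$ Tietze lemma invoked in the elimination step: a relator of a pro-$p$ presentation having a unit coefficient (mod $p$) at some generator in the Frattini quotient may be used to simultaneously remove that generator and that relator. This is standard --- it follows, for instance, from the Hopf formula $H_2(G, \F_p) \cong (R \cap [F,F]F^p) / [F,R]R^p$ applied to the intermediate free pro-$p$ group, from which one sees that the quotient by a single such relator remains free of rank one smaller --- but it is the one ingredient beyond pure bookkeeping.
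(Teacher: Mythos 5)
Your Tietze-transformation argument is sound as far as it goes, and it actually supplies details that the paper's own proof elides: where the paper simply asserts that $\G(q,\sigma)$ ``is specified by the $g$ relations $\alpha(x_{i_j})x_{i_j}^{-1}$,'' you verify the bookkeeping --- the redundancy of $R_N$ via $\alpha(x_1\cdots x_N)=1$, and the within-cycle elimination, which is legitimate because the relator $R_{\sigma^r(i_j)}$ has image $q\,x_{\sigma^{r+1}(i_j)}-x_{\sigma^r(i_j)}$ in the Frattini quotient and the elimination matrix is triangular with the unit $q$ on the diagonal. (One small remark: you only need $q\in\Z_p^*$ here, not that $q$ generates $\Z_p^*$.) The pro-$p$ Tietze lemma you invoke is standard, and your count of $g$ surviving generators and $g$ surviving relators is correct.

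The gap is that this establishes only half of the proposition as the paper intends it. Exhibiting a presentation with $g$ generators and $g$ relators gives the upper bounds $h_1(\G(q,\sigma),\F_p)\le g$ and $h_2(\G(q,\sigma),\F_p)\le g$; it does not show that the generator and relator ranks are exactly $g$, which is what is needed for $\G(q,\sigma)$ to be balanced with the same invariants as the groups $\Gamma$ satisfying $h_1=h_2=g$ that it is later compared with. The paper's proof spends its second sentence on precisely this point: since $\G(q,\sigma)^{ab}\cong\oplus_{j}\Z_p/(q^{k_j}-1)\Z_p$ is finite of $p$-rank $g$ (each factor being nontrivial under the standing assumption that the corresponding norms are $1$ mod $p$), one gets $h_1=g$, and then $h_2\ge h_1=g$ because for a pro-$p$ group with finite abelianization the images of a minimal set of relators must span a finite-index subgroup of $F^{ab}\cong\Z_p^{h_1}$, forcing the relator rank to be at least the generator rank. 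This reverse inequality is a one-line addition, but your proof as written does not contain it.
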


\begin{proof} $\G(q,\sigma)$ is clearly generated by the $g$ elements $g_j$, and is specified by the $g$ relations $\alpha(x_{i_j}) x_{i_j}^{-1}$.  So $h_2(\G(q,\sigma),\F_p) \leq g$; since $\G(q,\sigma)^{ab}$ is finite, we also have $h_2(\G(q,\sigma),\F_p) \geq g$.
\end{proof}

Let $\Gamma$ be some finite $g$-generated $p$-group, and let $c_1, \ldots, c_g$ be conjugacy classes in $\Gamma$ such that
\begin{itemize}
\item $c_1, \ldots, c_g$ generate $\Gamma$;
\item $c_i^{q^{k_i}} = c_i$ for all $i$.
\end{itemize}

We denote by $(\Gamma,\cc)$ the data of a pro-$p$ group $\Gamma$ together with a $g$-tuple of conjugacy classes as above.  Given a permutation $\sigma$ together with a set $i_1, \ldots, i_g$ of cycle representatives for $\sigma$, we write
\beq
\Epi(\G(q,\sigma),(\Gamma,\cc))
\eeq
for the set of surjective homomorphisms $\G(q,\sigma) \ra \Gamma$ which send $g_j$ to the conjugacy class $c_j$ for each $j$.  (The notation is slightly misleading insofar as  $\Epi(\G(q,\sigma),(\Gamma,\cc))$ depends not only on $\sigma$ but on the choice of cycle representatives, but this ambiguity will not concern us.)

Similarly, we write $\Epi(F,(\Gamma,\cc))$ for the set of surjective homomorphisms $F \ra \Gamma$ which send $x_{\sigma^m(i_j)}$ to the conjugacy class $c_j^{q^m}$ for each $j$.  In particular, the pullback of an element of $\Epi(\G(q,\sigma),(\Gamma,\cc))$ to $F$ always lies in $\Epi(F,(\Gamma,\cc))$.  

The set $\Epi(F,(\Gamma,\cc))$ carries an action of $P_N \subset \Aut(F)$ by composition on the left.  Our first aim is to make an educated guess about the transitivity of this action.



\begin{heur}
Suppose $\Gamma$ is balanced.  Then the action of $P_N$ on  $\Epi(F,(\Gamma,\cc))$ is transitive when $N$ is sufficiently large relative to $g$.  \label{he:pure}
\end{heur}

\begin{rem}  A transitivity theorem of this kind for the action of the full braid group has been proven by Fried and V\"{o}lklein~\cite[Appendix]{frie:frvo}.  More precisely, the orbits of the full braid group are identified with a quotient of the Schur multiplier $H_2(\Gamma,\Z)$; in this case, the Schur multiplier is trivial because $\Gamma$ is balanced. See also Lemma 4.10 below.
\end{rem}




We are now ready to present the second justification for Heuristic~\ref{he:main}.

\begin{prop} Suppose $\Gamma$ is a balanced finite $p$-group with abelianization isomorphic to $W(q,\sigma)$, and assume Heuristic~\ref{he:pure}.  Then the probability that $\G(q,\sigma)$ is isomorphic to $\Gamma$ is $A_Z(\Gamma)/|\Aut(\Gamma)|$. 
\label{pr:expqsigma}
\end{prop}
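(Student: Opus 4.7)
The plan is to compute the expected number of surjections $\G(q,\sigma)\twoheadrightarrow\Gamma$ under the Haar measure on $B_N(q,\sigma)$, and identify this with $|\Aut(\Gamma)|$ times the probability of isomorphism. I would start from the identity
$$|\Aut(\Gamma)|\cdot\mathrm{Prob}[\G(q,\sigma)\cong\Gamma]=\mathbb{E}_\alpha\bigl[|\Epi(\G(q,\sigma),\Gamma)|\bigr],$$
whose justification requires showing that any surjection $\phi:\G(q,\sigma)\twoheadrightarrow\Gamma$ is an isomorphism. Since both groups are balanced and share the finite abelianization $W(q,\sigma)$, $\phi$ is an iso on Frattini quotients, and a Hopf-formula/balanced comparison of $h_2$'s then forces $\ker\phi=1$. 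A surjection $\G(q,\sigma)\twoheadrightarrow\Gamma$ is the same datum as a surjection $\tilde f:F\twoheadrightarrow\Gamma$ with $\tilde f\circ\alpha=\tilde f$, so
$$\mathbb{E}_\alpha\bigl[|\Epi(\G(q,\sigma),\Gamma)|\bigr]=\sum_{\tilde f\in\Epi(F,\Gamma)}\mathrm{Prob}_\alpha[\tilde f\circ\alpha=\tilde f].$$

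Next I would partition $\Epi(F,\Gamma)$ by the $g$-tuple $\cc(\tilde f)=([\tilde f(x_{i_1})],\ldots,[\tilde f(x_{i_g})])$ of conjugacy classes at the cycle representatives. Closing each $\sigma$-cycle in the equation $\tilde f\circ\alpha=\tilde f$ forces $c_j^{q^{k_j}}=c_j$; together with the surjectivity of $\tilde f$ onto $\Gamma^{ab}\cong W(q,\sigma)$ this recovers precisely the conditions of Definition~\ref{de:azg}, so the valid $\cc$'s number exactly $A_Z(\Gamma)$. For each such $\cc$ and each $\tilde f\in\Epi(F,(\Gamma,\cc))$, I would fix any $\alpha_0\in B_N(q,\sigma)$ (nonempty by the discussion in \S 4.2) and write $\alpha=\beta\alpha_0$ with $\beta$ Haar-random in $P_N$. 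The equation becomes $\tilde f\circ\beta=\tilde f\circ\alpha_0^{-1}$; a short cycle-tracking calculation, using $c_j^{q^{k_j}}=c_j$, places $\tilde f\circ\alpha_0^{-1}$ in the same set $\Epi(F,(\Gamma,\cc))$ as $\tilde f$, and Heuristic~\ref{he:pure} then furnishes $P_N$-transitivity on this set. Hence the solution locus for $\beta$ is a single coset of $\mathrm{Stab}_{P_N}(\tilde f)$, of normalized Haar measure $1/|\Epi(F,(\Gamma,\cc))|$. Summing this constant over the $|\Epi(F,(\Gamma,\cc))|$ maps in each orbit contributes exactly $1$ per valid $\cc$, so the expected Epi count is $A_Z(\Gamma)$, and the proposition follows.

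The main obstacle I anticipate is the opening identity --- passing from $\mathrm{Prob}[\G(q,\sigma)\cong\Gamma]$ to $\mathbb{E}[|\Epi|]/|\Aut(\Gamma)|$. One must rule out contributions from random $\alpha$ for which $\G(q,\sigma)$ is strictly larger than $\Gamma$ yet still surjects onto it, and the balanced-and-matching-abelianization hypothesis does not straightforwardly exclude this (an infinite balanced pro-$p$ group can a priori surject onto a finite balanced one sharing the same $h_1$). All of the genuine braid-group input, by contrast, is packaged cleanly inside Heuristic~\ref{he:pure}, which we are permitted to assume.
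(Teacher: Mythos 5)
Your overall architecture is the same as the paper's: both proofs compute the expected number of epimorphisms $\G(q,\sigma)\ra\Gamma$ by identifying it with the number of $\alpha$-fixed points on $\Epi(F,(\Gamma,\cc))$, use transitivity of the $P_N$-action (Heuristic~\ref{he:pure}) to make each valid $\cc$ contribute exactly $1$ (the paper invokes Burnside's lemma for the coset $B_N(q,\sigma)$ of $P_N$; your coset-of-the-stabilizer computation is that argument unwound), and then convert the expected count $A_Z(\Gamma)$ into a probability of isomorphism by arguing that every surjection $\G(q,\sigma)\ra\Gamma$ is an isomorphism.

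The one place you stop short is exactly the step you flag at the end: you doubt that ``balanced with matching finite abelianization'' rules out a strictly larger $\G(q,\sigma)$ surjecting onto $\Gamma$. It does, and the paper supplies the short proof, which you should incorporate rather than leave as a worry. For a finite balanced $p$-group $\Gamma$ with finite abelianization one has $H^2(\Gamma,\Qp/\Zp)=0$: in the exact sequence $0 \ra H^1(\Gamma,\Qp/\Zp)/pH^1(\Gamma,\Qp/\Zp) \ra H^2(\Gamma,\Z/p\Z) \ra H^2(\Gamma,\Qp/\Zp)[p] \ra 0$ the first two terms both have dimension $g$, so the third vanishes, and a torsion abelian $p$-group with trivial $p$-torsion is trivial. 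Now if $K$ is the kernel of a surjection $\G(q,\sigma)\ra\Gamma$, inflation--restriction gives $H^1(\Gamma,\Qp/\Zp) \ra H^1(\G(q,\sigma),\Qp/\Zp) \ra H^1(K,\Qp/\Zp)^\Gamma \ra H^2(\Gamma,\Qp/\Zp)=0$; the first map is an isomorphism because both abelianizations are the finite group $W(q,\sigma)$ and a surjection between isomorphic finite groups is injective. Hence $\Hom(K,\Qp/\Zp)^\Gamma=0$, which forces $K=1$, since a nontrivial closed normal subgroup of a pro-$p$ group admits a nontrivial $\Z/p\Z$-quotient on which the ambient group acts trivially. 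Note that this argument never uses that $\G(q,\sigma)$ is balanced, only that its abelianization is the correct finite group, so the scenario you worry about (an infinite balanced pro-$p$ group surjecting onto a finite balanced one with the same $h_1$) cannot occur once the abelianizations agree. With this lemma in place your proof closes up and coincides with the paper's.
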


\begin{proof}  The set  $\Epi(\G(q,\sigma),(\Gamma,\cc))$ is the set of $\alpha$-fixed points on $\Epi(F,(\Gamma,\cc))$, where $\alpha$ is a random element of the coset $B_N(q,\sigma)$ of $P_N$ in $B_N$.  By Burnside's lemma, the average number of fixed points of $\gamma$ is the number of fixed points of $(q,\sigma) \in \Z_p^* \times S_N$ in its action on the $P_N$-orbits on $\Epi(F,(\Gamma,\cc))$.  By Heuristic~\ref{he:pure}, there is only one such orbit.  We conclude that
\beq
\E |\Epi(\G(q,\sigma),(\Gamma,\cc))| = 1.
\eeq

\begin{lem}
Every surjection from $\G(q,\sigma)$ to $\Gamma$ is an isomorphism.
\end{lem}

\begin{proof}
The long exact sequence in group cohomology yields an exact sequence
\beq
0 \ra H^1(\Gamma,\Qp/\Zp) / p H^1(\Gamma,\Qp/\Zp) \ra H^2(\Gamma,\Z/p\Z) \ra H^2(\Gamma,\Qp/\Zp)[p]  \ra 0.
\eeq
The first two terms have dimension $g$, so the third term vanishes; hence, so does $H^2(\Gamma,\Qp/\Zp)$.

Write $K$ for the kernel of the map $\G(q,\sigma) \ra \Gamma$.  The inflation-restriction sequence gives
\beq
H^1(\Gamma,\Qp/\Zp) \ra H^1(\G(q,\sigma),\Qp/\Zp) \ra H^1(K,\Qp/\Zp)^\Gamma \ra H^2(\Gamma,\Qp/\Zp) = 0.
\eeq

Our hypothesis on $\Gamma^{ab}$ implies that the first map is an isomorphism; we conclude that $H^1(K,\Qp/\Zp)^\Gamma$ $ = 0$, which implies that $K$ is trivial, since a nontrivial $K$ would have a $\Z/p\Z$-quotient on which $\Gamma$ acts trivially.
\end{proof}


Summing over all $\cc$ of type $Z$, we find that the expected number of isomorphisms from $G(q,\sigma)$ to $\Gamma$ is exactly $A_Z(G)$.  The number of such isomorphisms is either $0$ or $|\Aut(\Gamma)|$; the desired result follows.
\end{proof}

We now explain how Heuristic~\ref{he:main} follows from Proposition \ref{pr:expqsigma}.  The $N$ punctures of the affine curve $U/\F_q$ (excepting $x_\infty$) can be thought of as $g$ closed points; we refer to this set of points as $S$ and define the type $S$ as in the first section.  The hypothesis that $S$ is of some specified type $Z$ can be thought of as a condition on $\sigma$; namely, that $W(q,\sigma) \cong W(Z)$.  So the probability that $G_U(p) \cong G$ can be heuristically estimated as the probability that $\G(q,\sigma) \cong G$, conditional on the fact that $W(q,\sigma) \cong W(Z)$.  By Proposition~\ref{pr:expqsigma}, This probability is $A_Z(\Gamma)/|\Aut(\Gamma)|$ for each such $\sigma$.

More generally, the argument of this section suggests the following:   Let $\Gamma$ be a $p$-group such that the action of the pure braid group $P_N$ on $\Epi(F,(\Gamma,\cc))$ is transitive; then the expected number of epimorphisms from $G_S(p)$ to $\Gamma$ is $A_Z(\Gamma)$.  (We emphasize that we do not presently know any examples where the action of $P_N$ is intransitive.)

In particular, suppose $\Gamma$ is a group of $p$-class $k$, with the property that the existence of a surjection from a balanced group $G$ to $\Gamma$ implies that $\Gamma$ is the quotient of $G$ by the $(k+1)$-th term in its lower $p$-central series.  (We will encounter such examples in the data presented in the following sections.)  Then the probability that the $p$-class $k$ quotient of $G(q,\sigma)$ is isomorphic to $\Gamma$ should be $A_Z(\Gamma)/|\Aut(G)|$.

\subsection{The case $p=2$}

\label{ss:p2}

The geometric analogy in the case $p=2$ is somewhat more difficult to justify, since there is no choice of $q$ making the cyclotomic character $\chi: G_{\F_q} \ra \Z_2^*$ surjective.  Our belief in Heuristic~\ref{he:main} in this case stands on three legs:
\begin{itemize}
\item The argument via Justification 1 that $\lim P(Z,\Gamma,X)$ should be proportional to $A_Z(\Gamma)/|\Aut(\Gamma)|$ for all $p$;
\item The argument via Justification 2 that, in case $p$ is odd, the constant of proportionality should be $1$;
\item The case $p=2$ is the easiest to test experimentally; as we shall see in the following section, Heuristic~\ref{he:main} appears to agree very well with the experimental data and with provable asymptotics when such are available.
\end{itemize}

\section{Evidence}

\label{s:evidence}

Let $p$ be an odd prime and $S$ a set of $g$ primes that are all $1 \pmod p$. Then the Galois group of the maximal $p$-extension of $\Q$ unramified outside $S$ is a pro-$p$ group with generator rank $g$ and relator rank $g$. The same result holds for $p=2$ if we allow ramification at $\infty$. 

Given a pro-$p$ group $\Gamma$ with generator rank and relator rank both equal to $g$, we consider the set of $g$-tuples of primes, all $1 \pmod p$, such that the corresponding Galois group is isomorphic to $\Gamma$. In all cases considered so far, this set appears to have density predicted by Heuristic~\ref{he:main}.  In some cases this result on the density can be proven, whereas in others we give experimental evidence.

\subsection{Groups with abelianization $\Z/p\Z \times \Z/p\Z$ ($p$ odd)}

In this case, $S$ consists of two primes, say $q,r$, that are $1 \pmod p$ but not $1 \pmod {p^2}\ (\ast)$. 

(i) The smallest $2$-generator $2$-relator $p$-group $\Gamma$ is  unique of order $p^3$ (denoted SmallGroup($p^3,4$) in the MAGMA database). As shown on p.129 of \cite{koch}, the Galois group of the maximal $p$-extension of $\Q$ unramified outside $q,r$ is isomorphic to $\Gamma$ if and only if $q$ is not a $p$th power $\pmod r$ or $r$ is not a $p$th power $\pmod q$. Among ordered pairs of primes satisfying $(\ast)$, the natural density of such pairs is then, by Chebotarev, $1-1/p^2$. 

As for the heuristic, there are $p^2-1$ conjugacy classes that are outside the Frattini subgroup of $\Gamma$ and that are closed under taking $(p+1)$th powers. Working in $\Z/p\Z \times \Z/p\Z$, we see that $p(p+1)(p-1)^2$ ordered pairs of such conjugacy classes generate $\Gamma^{ab}$ and hence $\Gamma$. On the other hand, the automorphism group of $\Gamma$ has order $(p-1)p^3$ and so $A(\Gamma)/|\text{Aut}\  \Gamma| = 1-1/p^2$.

(ii) We focus on $p=3$. The next smallest $2$-generator $2$-relator $3$-group with abelianization $\Z/3\Z \times \Z/3\Z$ and nonzero probability of arising is a unique group $\Gamma$ of order $3^5$. There is another $2$-generator $2$-relator group of order $3^5$, but it is not generated by elements conjugate to their $4$th power. For such groups our conjecture is trivially true, since it follows that $A(\Gamma) = 0$ and also that $\Gamma$ lacks elements that would play the role of tame inertia generators.

Suppose that $q,r$ are primes that are $1 \pmod 3$ but not $1 \pmod 9$. By the method of Boston and Leedham-Green~\cite{BLG}, the Galois group of the maximal $p$-extension of $\Q$ unramified outside $q,r$ is isomorphic to $\Gamma$ if and only if two of the Sylow $3$-subgroups of the ray class groups of modulus $qr$ of its $4$ cubic subextensions are isomorphic to $\Z/3\Z \times \Z/3\Z \times \Z/3\Z$ and two isomorphic to $\Z/3\Z \times \Z/9\Z$.  This implies in particular that $q$ is a cube mod $r$ and $r$ is a cube mod $q$, a condition which is satisfied for $1/9$ of all pairs $(q,r)$.  Heuristic~\ref{he:main} asserts that $2/9$ of these will have $G_{\{q,r\}}(3)$ isomorphic to $\Gamma$, since $A(\Gamma)/|\text{Aut}\  \Gamma| = 2/81.$

We tested $58$ pairs with $q$ a cube mod $r$ and $r$ a cube mod $q$; a MAGMA computation shows that $13$ of these pairs satisfy the condition on ray class groups of subextensions equivalent to $G_{\{q,r\}}(3) \cong \Gamma$.  The empirical density $13/58 \times 1/9 = 0.0249$ compares well with the predicted $2/81 = 0.0247$.

\subsection{Groups with abelianization $\Z/2\Z \times \Z/2\Z$}

In this case, $S$ consists of two primes $q$ and $r$ both congruent to $3 \pmod 4$.  Order $q$ and $r$ such that $q$ is a square mod $r$.  Then, as shown in Theorem 2.1 of Boston-Perry~\cite{BP}, the Galois group $G_{\{q,r\}}(2)$ is semidihedral of order $2^k$, where $2^k$ is the largest power of $2$ dividing $q^2-1$.  In particular, $G_{\{q,r\}}(2)$ is determined by the type of $(q,r)$.   This agrees with the heuristic, which gives  $A_Z(\Gamma)/|\text{Aut}\  \Gamma| = 1$ for each semidihedral group $\Gamma$. 

\subsection{Groups with abelianization $\Z/2\Z \times \Z/4\Z$}

In this case the two primes $q,r$ are $3 \pmod 4$ and $5 \pmod 8$. Let us suppose further that $q$ is $3 \pmod 8$, which fixes the type. 

(i) The smallest such $2$-relator group is the modular group of order $16$. The Galois group of the maximal $2$-extension unramified outside $\{q,r\}$ is isomorphic to this group if and only if $q$ is not a square $\pmod r$. This occurs with probability $1/2$, as Heuristic~\ref{he:main} predicts.

(ii) The next smallest such $2$-relator group with nonzero probability of arising has order $128$ (SmallGroup($128,87$) in the MAGMA database). As shown in Boston-Perry~\cite{BP}, this arises if and only if $q$ is a $4$th power $\pmod r$. This occurs with probability $1/4$ and agrees with Heuristic~\ref{he:main}.

(iii) The next smallest such $2$-relator groups with nonzero probability are the two groups $\Gamma_1, \Gamma_2$ of order $2^{19}$ found by Boston and Leedham-Green~\cite{BLG}. They showed there that the Galois group of the maximal $2$-extension unramified outside $\{q,r\}$ is isomorphic to $\Gamma_1$ or $\Gamma_2$ if and only if $q$ is a square but not a $4$th power mod $r$ and the Sylow $2$-subgroup of the ray class group of modulus $qr$ of $\Q(\sqrt{-qr})$ is isomorphic to $\Z/2 \times \Z/16$. The heuristic suggests that each group should arise $1/16$ of the time, so that this case should arise $1/8$ of the time. 

The experimental evidence agrees - we test it by letting $q=5$ and $p$ run through all primes $< 100000$ that are $19 \pmod {40}$ (so that, as assumed, $p$ is $3 \pmod 8$ and a square but not a $4$th power $\pmod q$). In general, the Sylow $2$-subgroup of the ray class group is $\Z/2\Z \times \Z/2^n\Z$ with $n \geq 4$. The cases $n=4, 5, 6, 7$ appear respectively $301, 151, 74, 42$ times, which clearly suggests that half of the $1/4$ of cases remaining from (i) and (ii) above have $n=4$.

\subsection{Groups with abelianization $\Z/4\Z \times \Z/4\Z$}

\includegraphics[height=240mm]{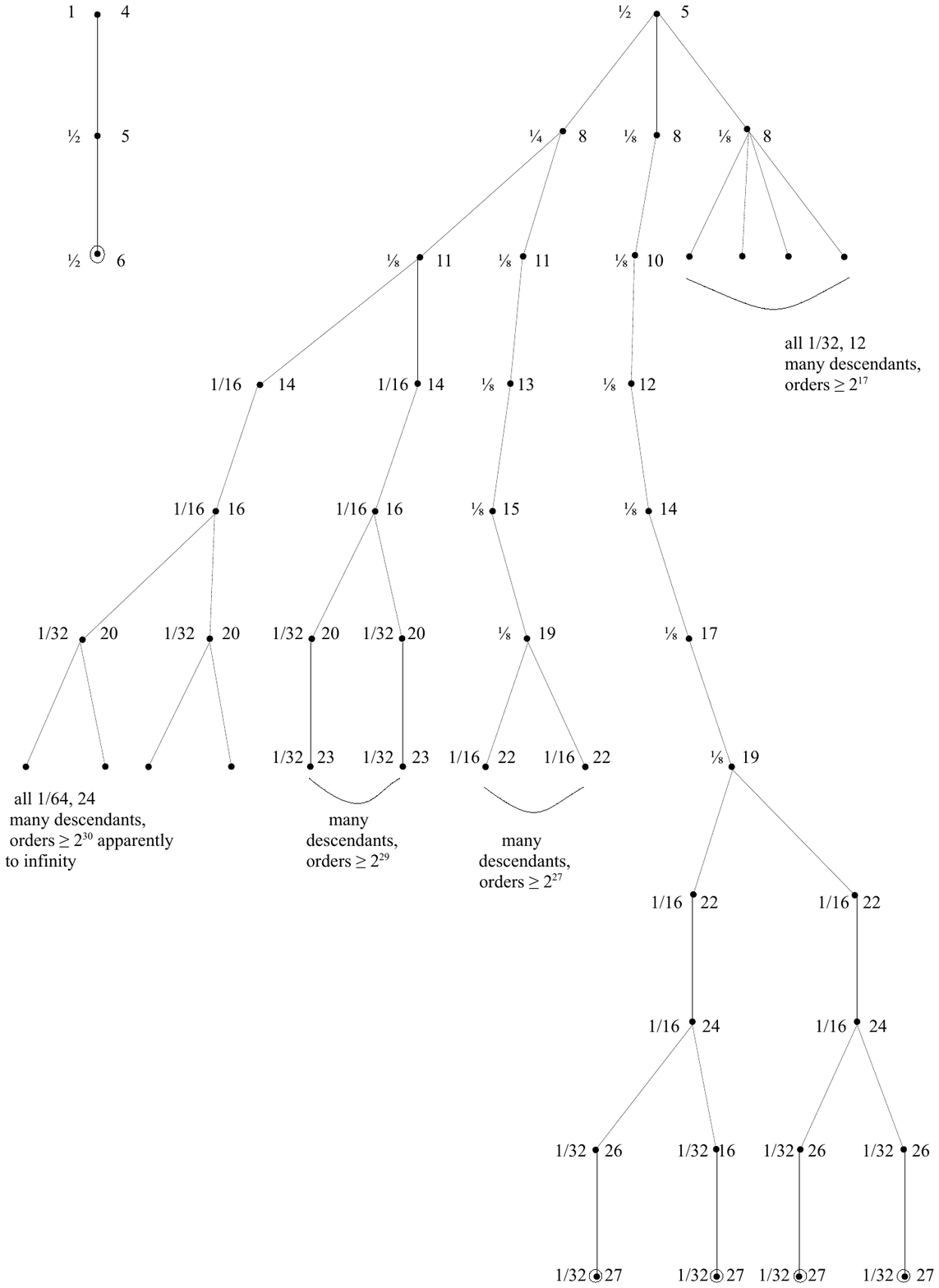}

Here the two primes $q,r$ are $5 \pmod 8$, which fixes the type. 

The figure gives a subtree of the O'Brien tree \cite{OBp} of $2$-generator $2$-groups. The vertices of his tree are isomorphism classes of $2$-generator $2$-groups. It has a root, namely $\Z/2\Z \times \Z/2\Z$, and the groups at distance $n-1$ from the root are those of $2$-class $n$. A group of $2$-class $n$ is connected to a group $H$ of $2$-class $n+1$ if and only if it is isomorphic to $H/P_n(H)$, where $P_n(H)$ is the last but one term of its lower $2$-central series. 

Our subtree consists of those groups $G$ that have abelianization $\Z/4\Z \times \Z/4\Z$, have nonzero ``mass" $A(G)$ (its value is given to the left of each vertex), and are $2$-class quotients of some (possibly infinite) $2$-relator pro-$2$ group. We call such groups {\it viable}. This last matter can be detected by checking that the $2$-multiplicator rank of $G$ minus its nuclear rank is at most $2$. If the nuclear rank of $G$ is zero, then it is $2$-relator. In that case it has no descendants and is a terminal vertex. Otherwise, if the nuclear rank of $G$ is $r$, then (except for the root) its immediate descendants all have order $2^r|G|$. This is important for us since, except for the groups of $2$-class $2$, one descendant of a group cannot be a quotient of another and we do indeed (assuming transitivity of the pure braid group action) have a distribution on a rooted tree. Note that, at least under this assumption, one might expect $A(\Gamma)$ to be the number of surjections from $G_{\{q,r\}}(2)$ to $\Gamma$. The exponent of the order of a group is placed to the right of the vertex.

The subtree falls into $4$ parts. By Part I, we mean the isolated twig to the left. Let $\Gamma_i (i = 1,2,3)$ denote the three groups of $2$-class $3$ and order $256$. Parts II, III, and IV will refer to the subtrees with roots $\Gamma_2, \Gamma_3$, and $\Gamma_1$ respectively. Two of the abelianizations of the index $2$ subgroups of each group are isomorphic to $\Z/2\Z \times \Z/4\Z \times \Z/4\Z$, whereas the third is $\Z/2\Z \times \Z/4\Z \times \Z/4\Z$ for $\Gamma_1$ and $\Z/2\Z \times \Z/2\Z \times \Z/8\Z$ for the other two groups. Ray class groups of quadratic fields tell us that $\Gamma_1$ arises if one of $q,r$ is a $4$th power mod the other but not vice versa, which does occur with density $1/4$, and that $\Gamma_2$ or $\Gamma_3$ arises otherwise, occurring with density $1/4$. These match the predicted values.

\subsubsection{Part I}

The smallest $2$-relator group in the subtree has order $64$ (SmallGroup($64,28$) in the MAGMA database) and arises if and only if $q$ is not a square mod $r$. This occurs with probability $1/2$, as predicted by Heuristic~\ref{he:main}.

\subsubsection{Part II}

The nuclear rank of $\Gamma_2$ is $2$, indicating that its descendant tree should not be too complicated. Using MAGMA to produce its viable descendants we find that its subtree is finite, terminating in four groups of order $2^{27}$ and $2$-class $12$. These have not appeared in the literature before. They each have mass $1/32$ according to the heuristic. Among the abelianizations of their index $2$ subgroups is still $\Z/2\Z \times \Z/2\Z \times \Z/8\Z$. On the other hand, the viable descendants of $\Gamma_3$ all have larger abelianizations of index $2$ subgroups. It follows that if neither $q$ nor $r$ is a $4$th power mod the other, then the Galois group of the maximal $2$-extension of $\Q$ unramified outside $\{q,r\}$ is one of these $4$ groups of order $2^{27}$. Moreover, the total mass they carry, namely $1/8$, agrees with our conjecture, which furthermore suggests that each of the $4$ groups should occur equally often as $q,r$ vary.

\subsubsection{Part III}

The descendant tree of $\Gamma_3$ contains groups with nuclear rank $5$ and higher, which makes it prohibitive to calculate to any depth. By the process of elimination above, it and its descendants correspond to $q,r$ that are both $4$th powers mod the other. Since in this case the corresponding ray class groups include one isomorphic to $\Z/2\Z \times \Z/2\Z \times \Z/2^n\Z$ for any $n \geq 4$, there will be infinitely many possible Galois groups, but as conjectured in \cite{Bos1}, the evidence suggests that these groups are all finite.

That evidence came from performing the experiment of looking at millions of pro-$2$ groups with presentation of the form $< x,y | x^a=x^5, y^b=y^5 >$ (as $a$ and $b$ vary through words in $x$ and $y$) and by saving those whose $2$-class quotients appeared to grow unboundedly and whose finite index subgroups had finite abelianization (within reasonable computational limits). All the groups that passed these filters had abelianizations of all index $2$ subgroups isomorphic to $\Z/2\Z \times \Z/4\Z \times \Z/4\Z$; in other words, it appears that whenever $G_{\{q,r\}}(2)$ is infinite, it is a descendant of $\Gamma_1$.  We have established above that all descendants of $\Gamma_2$ are finite; we conjecture that, likewise, all descendants of $\Gamma_3$ are finite.


\subsubsection{Part IV}

The first author also suggested in \cite{Bos1} that all viable $2$-relator descendants of $\Gamma_1$ should be infinite, and that the sequence of orders of their $2$-class quotients should be a particular one (A001461 in Sloane's database of sequences). Our new investigations indicate that this must be modified to say that this is true for exactly one quarter of these cases. 

What happens is that $\Gamma_1$ has two viable descendants, each order $2^{11}$ and carrying mass $1/8$. One of these has two viable descendants, each of order $2^{14}$ and carrying mass $1/16$. Denote by $\Gamma$ the first of these groups of order $2^{14}$. Extensive experiments indicate that every pro-$2$ group with presentation of the form $< x,y | x^a=x^5, y^b=y^5 >$ and an element of order two outside its commutator subgroup (playing the role of complex conjugation) and whose $2$-class $5$ quotient is $\Gamma$
is infinite and has $2$-class tower whose orders match the sequence A001461.  Moreover, it
appears that every descendant of $\Gamma_1$ which is not a descendant of $\Gamma$ is finite.

The descendant tree of the second groups of order $2^{11}$ and $2^{14}$ can be pursued for quite a distance. The experiment above indicates that these subtrees will eventually terminate.

\section{Conservation of mass}

For a fixed prime $p$ and positive integer $g$, consider O'Brien's rooted tree whose nodes are isomorphism classes of finite $g$-generator $p$-groups. We prune this by saving only those groups of a particular type (which fixes their abelianization), that could arise as a $p$-class quotient of a $g$-relator group (their $p$-multiplicator and nuclear ranks differ by at most $g$), and that have nonzero mass. 

It can happen that one such group of a given $p$-class is a quotient of another - for instance, if $F$ is the free pro-$2$ group on $2$ generators, then $F/P_2(F)$ of order $32$ has $\Z/4\Z \times \Z/4\Z$ as a quotient but both are of $p$-class $2$. This tends however to be rare, since if the difference between the $p$-multiplicator rank and nuclear rank of $\Gamma$ equals $g$, the same will be true of its immediate descendants and they will all have the same order, namely $|\Gamma|p^r$, where $r$ is the nuclear rank of $G$ \cite{nover}. This means that (again, given some hypothesis on transitivity of pure braid group actions) we have produced a probability distribution on the rooted tree with root $\Gamma$, by assigning the mass $A_Z(\Gamma)/|\text{Aut}\  \Gamma|$ to each vertex $\Gamma$.

Two interesting questions arise. Is it possible to have a point mass, i.e. an infinite end along which the mass is bounded away from $0$?  Second, does the accumulated mass of FAb groups (meaning that every open subgroup has finite abelianization) or ones without infinite $p$-adic analytic quotients amount to $100\%$? These both have positive answers with respect to the measure given in \cite{Bos2}.

\end{document}